\documentclass[12pt,oneside]{article}
\usepackage{amssymb,amsmath,amsfonts,amsbsy, amsthm, xspace, latexsym, amscd, graphicx, fancyhdr,setspace,url}
\usepackage[margin=1in]{geometry}
\usepackage{fancyhdr}
\usepackage{enumitem}

\setlength\leftmargini{33pt}

\swapnumbers
\theoremstyle{definition}
\newtheorem{definition}{Definition}[section]
\newtheorem{remark}[definition]{Remark}
\newtheorem{example}[definition]{Example}

\newtheorem{theorem}[definition]{Theorem}

\newtheorem{lemma}[definition]{Lemma}
\newtheorem{corollary}[definition]{Corollary}


\newcommand{\ifff}{\emph{ i\,f\,f }\;} 

\newcommand{\card}{\ensuremath{{\rm{card}}}}

\newcommand{\ub}{\ensuremath{\mathcal{UB}}}

\newlist{def-enumerate}{enumerate}{2}
\newlist{thm-enumerate}{enumerate}{2}
\newlist{set-enum}{enumerate}{1}
\newlist{axiom-enum}{enumerate}{2}

\setlist[def-enumerate,1]{label=\textbf{\arabic*}.,leftmargin=*,labelindent=\parindent}
\setlist[def-enumerate,2]{label=\textbf{(\alph*)},leftmargin=*,labelindent=\parindent}
\setlist[thm-enumerate]{label=(\emph{\roman*}),leftmargin=*,labelindent=\parindent,align=right}
\setlist[set-enum]{label=\textbf{Set \arabic*},leftmargin=*}
\setlist[axiom-enum,1]{label=\textbf{Axiom \arabic*}, leftmargin=*}
\setlist[axiom-enum,2]{label=\textbf{Axiom (\theenumi.\arabic*)}, leftmargin=*}

\begin{document}
\title{Another Proof of the Existence a Dedekind Complete Totally Ordered Field}
\author{James F. Hall\\ 
                        Mathematics Department\\                
                        California Polytechnic State University\\
                        San Luis Obispo, California 93407, USA\\\																				(james@slohall.com)\\
and\\
Todor D. Todorov\\ 
                        Mathematics Department\\                
                        California Polytechnic State University\\
                        San Luis Obispo, California 93407, USA\\
																							(ttodorov@calpoly.edu)
}
\date{}
\maketitle
\begin{abstract}

	We describe the Dedekind cuts explicitly in terms of non-standard rational numbers. This leads to another construction of a Dedekind complete totally ordered field or, equivalently, to another proof of the consistency of the axioms of the real numbers. We believe that our construction is simpler and shorter than the classical Dedekind construction and Cantor construction of such fields assuming some basic familiarity with non-standard analysis.
\end{abstract}
\section{Preliminaries: Ordered Fields and Infinitesimals}\label{S: Preliminaries: Orderable Fields and Infinitesimals}

	We recall the main definitions and properties of totally ordered rings and fields. We also recall the basic properties of infinitesimal, finite and infinitely large elements of such fields. For more details and for the missing proofs, we refer the reader to (Lang~\cite{lang}, Chapter XI),  (van der Waerden~\cite{waerden}, Chapter 11) and Ribenboim~\cite{riben}.

\begin{definition}[Orderable Ring]\label{D: Orderable Ring} Let $\mathbb{K}$ be a ring (field). Then:
\begin{enumerate}
  \item $\mathbb K$ is called \emph{orderable} if there exists a non-empty set $\mathbb{K}_+ \subset\mathbb{K}$ such that: (a) $0\not\in\mathbb{K}_+$; (b) $\mathbb{K}_+$ is closed under the addition and multiplication in $\mathbb{K}$; (c) For every $x\in\mathbb{K}$ either $x=0 \emph{ or } x\in\mathbb{K}_+\emph{ or } -x\in\mathbb{K}_+$.

\item A ring (field) $\mathbb K$ is \emph{formally real} if, for every $n\in\mathbb N$, the equation $\sum_{k=0}^n x_k^2 = 0$  in $\mathbb{K}$ has only the trivial solution $x_1=\dots=x_n=0$.
\end{enumerate}
\end{definition}

\begin{theorem}\label{L: orderable formally real}
  A field $\mathbb K$ is orderable \ifff $\mathbb K$ is formally real.
\end{theorem}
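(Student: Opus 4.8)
The plan is to establish the two implications separately. The implication ``orderable $\Rightarrow$ formally real'' is a direct computation, while the converse is the Artin--Schreier theorem and requires a Zorn's lemma argument (compare Lang~\cite{lang}, Chapter XI).

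Suppose first that $\mathbb{K}$ is orderable with positive cone $\mathbb{K}_+$. I would begin by noting that $x^2\in\mathbb{K}_+$ for every $x\neq 0$: by trichotomy either $x\in\mathbb{K}_+$ or $-x\in\mathbb{K}_+$, and in either case $x^2 = x\cdot x = (-x)(-x)$ is a product of elements of $\mathbb{K}_+$, hence lies in $\mathbb{K}_+$ by closure under multiplication. Now if $\sum_k x_k^2 = 0$ with not all $x_k$ zero, then, discarding the vanishing terms, the sum equals a nonempty finite sum of elements of $\mathbb{K}_+$, which lies in $\mathbb{K}_+$ by closure under addition and is therefore $\neq 0$ since $0\notin\mathbb{K}_+$ --- a contradiction. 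Hence $\mathbb{K}$ is formally real.

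For the converse, I would call a subset $P\subseteq\mathbb{K}$ a \emph{preordering} if it is closed under addition and multiplication, contains every square of $\mathbb{K}$, and does not contain $-1$. The set $\Sigma$ of all finite sums of squares is closed under addition and, via $(\sum_i a_i^2)(\sum_j b_j^2) = \sum_{i,j}(a_i b_j)^2$, under multiplication; and here is the one place formal reality is used: if $-1\in\Sigma$ then $1^2 + (-1) = 0$ would be a nontrivial sum of squares vanishing, contrary to hypothesis, so $-1\notin\Sigma$ and $\Sigma$ is a preordering. Ordering the preorderings containing $\Sigma$ by inclusion, every chain has its union as an upper bound (the union is closed under $+$ and $\cdot$ by directedness and omits $-1$ since each member does), so Zorn's lemma yields a maximal preordering $P$.

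The heart of the proof --- and the step I expect to be the main obstacle --- is to show that a maximal preordering $P$ satisfies $P\cup(-P) = \mathbb{K}$. Given $a\notin P$, I would form $P' = \{\,p + aq : p,q\in P\,\}$, check that it is closed under addition and, using $a^2\in P$, under multiplication, and that it contains every square and contains $P$ (take $q=0$); since $a = 0 + a\cdot 1\in P'$, maximality forces $P'$ not to be a preordering, so $-1 = p + aq$ for some $p,q\in P$. Here the field structure enters decisively: $q\neq 0$ (else $-1 = p\in P$), so $a = -(1+p)\,q\,(q^{-1})^2$, a product of an element of $-P$ with elements of $P$, whence $a\in -P$. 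This proves $P\cup(-P) = \mathbb{K}$; moreover $P\cap(-P) = \{0\}$, since $a\in P$ and $-a\in P$ with $a\neq 0$ would give $-1 = (-a^2)(a^{-1})^2\in P$. Setting $\mathbb{K}_+ = P\setminus\{0\}$, the remaining verifications are routine and use that a field has no zero divisors: $0\notin\mathbb{K}_+$ by definition; $\mathbb{K}_+$ is closed under multiplication because a product of nonzero elements is nonzero; it is closed under addition because $p_1 + p_2 = 0$ would force $p_2\in P\cap(-P) = \{0\}$; and trichotomy for $x\neq 0$ is exactly $P\cup(-P) = \mathbb{K}$ together with $P\cap(-P) = \{0\}$. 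Thus $\mathbb{K}$ is orderable.
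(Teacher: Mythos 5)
Your proof is correct. Note, though, that the paper itself does not prove this theorem at all --- its ``proof'' is a one-line deferral to van der Waerden, Chapter 11 --- so there is no argument in the text to compare yours against; what you have written is essentially the standard Artin--Schreier proof that the cited reference contains. Both directions check out: the easy implication correctly uses that nonzero squares lie in $\mathbb{K}_+$ and that a nonempty sum of positives is positive; and in the converse you correctly isolate the one use of formal reality (ruling $-1$ out of the set $\Sigma$ of sums of squares), run Zorn's lemma on preorderings, and handle the key maximality step: if $a\notin P$ then $-1=p+aq$ forces $q\neq 0$, and the identity $a=-(1+p)\,q\,(q^{-1})^2$ places $a$ in $-P$. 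The final verifications ($P\cap(-P)=\{0\}$, closure of $P\setminus\{0\}$ under addition and multiplication, trichotomy) are all sound and correctly flag where the absence of zero divisors is needed. The only cosmetic remark is that the paper's Definition~\ref{D: Orderable Ring} states formal reality with the index running from $k=0$ while listing the trivial solution as $x_1=\dots=x_n=0$; this is a typo in the paper, and your argument is indifferent to it.
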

\begin{proof} For the proof we refer the reader to (van der Waerden~\cite{waerden}, Chapter 11).
\end{proof}

The field of complex numbers $\mathbb{C}$ is non-orderable, because the equation $x^2+y^2=0$ has a non-trivial solution, $x=i, y=1$, in $\mathbb{C}$. The field of the real p-adic numbers $\mathbb Q_p$ is non-orderable for similar reasons (see Ribenboim~\cite{riben} p.144-145).

\begin{definition}[Totally Ordered Rings]\label{D: Totally Ordered Rings} Let $\mathbb K$ be an orderable ring (field) and let the set $\mathbb{K}_+\subset\mathbb{K}$ satisfy the properties given above. Then:
\begin{enumerate}
\item  We define the relation $<_{\mathbb{K}_+}$ on $\mathbb{K}$ by $x<_{\mathbb{K}_+}y$ if $y-x\in\mathbb{K}_+$. We shall often write simply $<$ instead of $<_{\mathbb{K}_+}$ if the choice of $\mathbb{K}_+$ is clear from the context. Then $(\mathbb{K}, +, \cdot,  <)$, denoted for short by $\mathbb{K}$, is called a {\em totally ordered ring (field)}.

\item  Let $\mathbb{K}$ be a totally ordered ring (field) and let $x\in\mathbb{K}$ and $A\subset \mathbb K$. (a) We define the {\em absolute value} of $x$ by $|x|=:\max(-x,x)$; (b) We denote by  $\ub(A)$ the {\em set of the upper bounds} for $A$; (c) We denote by $\sup_\mathbb{K}(A)$ or, simply by $\sup(A)$, the {\em least upper bound} for $A$ (if exists).

\item A totally ordered ring (field) $\mathbb{K}$ is called \emph{Archimedean} if for every $x\in\mathbb{K}$, there exists $n\in\mathbb{N}$ such that $|x|\leq n$.

\item A totally ordered set (ring or field)  $\mathbb{K}$ is \emph{Dedekind complete} if every non-empty subset of $\mathbb{K}$ that is bounded from above has a supremum. 
\end{enumerate}
\end{definition}

\begin{theorem}[Rationals and Irrationals]\label{T: Rationals and Irrationals} Let $\mathbb{K}$ be a totally ordered field. Then:
\begin{description}

\item (i) $\mathbb{K}$ contains a copy of the field of the rational numbers $\mathbb{Q}$ under the order field embedding  $\sigma:\mathbb Q\to\mathbb K$ defined by: $\sigma(0)=:0$, $\sigma(n)=:n\cdot1$, $\sigma(-n)=:-\sigma(n)$ and $\sigma(\frac{m}{k})=:\frac{\sigma(m)}{\sigma(k)}$ for $n\in\mathbb N$ and $m,k\in\mathbb Z$. We shall simply write $\mathbb{Q}\subseteq\mathbb{K}$ for short.  
\item (ii) If $\mathbb{K}\setminus\mathbb{Q}$ is non-empty, then $\mathbb{K}\setminus\mathbb{Q}$ is dense in $\mathbb{K}$ in the sense that for every $a, b\in\mathbb{K}$, such that $a<b$, there exists $x\in\mathbb{K}\setminus\mathbb{Q}$ such that $a<x<b$.
\item (iii) If $\mathbb{K}$ is Archimedean, then $\mathbb{Q}$ is also dense in $\mathbb{K}$ in the sense that for every $a, b\in\mathbb{K}$ such that $a<b$ there exists $q\in\mathbb{Q}$ such that $a<q<b$.
\end{description}
\end{theorem}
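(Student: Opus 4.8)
\emph{Plan of proof.} For part~(\emph{i}) the plan is routine verification, the only point worth isolating being that $\mathbb{K}$ has characteristic zero. Indeed, if $1\notin\mathbb{K}_+$ then, since $1\neq 0$, trichotomy gives $-1\in\mathbb{K}_+$, hence $1=(-1)(-1)\in\mathbb{K}_+$ by closure under multiplication --- a contradiction; so $1\in\mathbb{K}_+$ and therefore $n\cdot 1=1+\dots+1\in\mathbb{K}_+$ is nonzero for every $n\geq 1$ (this is also immediate from Theorem~\ref{L: orderable formally real}). Consequently $n\mapsto n\cdot 1$ is a well-defined injective ring homomorphism $\mathbb{Z}\to\mathbb{K}$, and extending it by $\sigma(m/k)=\sigma(m)/\sigma(k)$ is legitimate because $\sigma(k)\neq 0$ for $k\neq 0$, and well defined because $m/k=m'/k'$ forces $mk'=m'k$, hence $\sigma(m)\sigma(k')=\sigma(m')\sigma(k)$. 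That $\sigma$ respects $+$, $\cdot$ and sends $1$ to $1$ is a direct computation, and injectivity is then automatic since the kernel of a homomorphism out of the field $\mathbb{Q}$ must be $\{0\}$. For the order it suffices to check $\sigma(q)\in\mathbb{K}_+$ for every positive rational $q=m/k$ with $m,k\in\mathbb{N}$: here $m\cdot 1,\,k\cdot 1\in\mathbb{K}_+$, and $(k\cdot 1)^{-1}\in\mathbb{K}_+$ as well, since otherwise $-(k\cdot 1)^{-1}\in\mathbb{K}_+$ would give $-1=(k\cdot 1)\cdot\big(-(k\cdot 1)^{-1}\big)\in\mathbb{K}_+$ and then $0=1+(-1)\in\mathbb{K}_+$; now use closure under multiplication. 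Additivity of $\sigma$ upgrades this to $p<q\Rightarrow\sigma(p)<\sigma(q)$.

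Part~(\emph{ii}) is the one that needs an idea. The difficulty is that in a possibly non-Archimedean field we can neither shrink an arbitrary irrational to a prescribed small size nor take its integer part, so the naive candidate $a+(b-a)t$ with $t$ irrational in $(0,1)$ may turn out \emph{rational} when $a$ and $b$ are irrational; the remedy is to transport the problem to an interval with \emph{rational} endpoints. Step~1: exhibit one irrational in $(0,1)$. If $\xi\in\mathbb{K}\setminus\mathbb{Q}$, then $\omega:=|\xi|=\max(\xi,-\xi)$ is irrational with $\omega>0$, and $\tau:=\omega/(1+\omega)$ satisfies $0<\tau<1$ (because $1+\omega>0$ and $\omega<1+\omega$) and is irrational: from $\tau=r\in\mathbb{Q}$ one gets $\omega(1-r)=r$, giving $\omega=r/(1-r)\in\mathbb{Q}$ if $r\neq 1$ and $0=1$ if $r=1$, both impossible. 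Step~2: given $a<b$, use that a totally ordered field is densely ordered --- put $m:=(a+b)/2$ and $m_1:=(a+m)/2$, so $a<m_1<m<b$; if either $m$ or $m_1$ is irrational it is the desired point. Otherwise $m_1<m$ are rationals with $(m_1,m)\subseteq(a,b)$, and $\psi(x):=(x-m_1)/(m-m_1)$ is an order isomorphism of $\mathbb{K}$ with rational coefficients, so $\psi$ and $\psi^{-1}$ each carry $\mathbb{Q}$ onto $\mathbb{Q}$ and hence $\mathbb{K}\setminus\mathbb{Q}$ onto $\mathbb{K}\setminus\mathbb{Q}$; therefore $\psi^{-1}(\tau)$ is irrational and lies in $\psi^{-1}((0,1))=(m_1,m)\subseteq(a,b)$. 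I expect the only genuine step to be finding the explicit $\tau$ of Step~1 that stays strictly between $0$ and $1$ regardless of whether $\xi$ is finite, infinitesimal, or infinitely large; the rest is bookkeeping.

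For part~(\emph{iii}) the plan is the classical argument, invoking the Archimedean hypothesis twice. Given $a<b$, set $\delta=b-a>0$ and pick $n\in\mathbb{N}$ with $|1/\delta|\leq n$; then $N:=n+1$ satisfies $1/N<\delta$. Again by the Archimedean property $Na$ lies below some integer, so $\{j\in\mathbb{Z}:\ j>Na\}$ is a non-empty set of integers bounded below and therefore has a least element $k$; minimality gives $k-1\leq Na<k$, whence $a<k/N$ and $k/N=(k-1)/N+1/N\leq a+1/N<a+\delta=b$. Thus $k/N\in\mathbb{Q}\cap(a,b)$, and nothing here is subtle once the Archimedean assumption is available.
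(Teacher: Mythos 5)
Your proof is correct; note that the paper itself gives no argument for this theorem --- it is stated without proof, with the missing proofs of the preliminaries deferred to the cited references (Lang, van der Waerden, Ribenboim) --- so there is no in-paper proof to compare against. All three parts check out: the verification in (\emph{i}) that $1\in\mathbb{K}_+$ and hence $\mathbb{K}$ has characteristic zero is the right pivot; in (\emph{ii}) you correctly identify the genuine difficulty (one cannot rescale by an Archimedean argument or take integer parts in a general ordered field), and the combination of the explicit irrational $\tau=\omega/(1+\omega)\in(0,1)$ with a reduction to an interval with rational endpoints via midpoints and a rational affine map is a clean way around it; and (\emph{iii}) is the standard Archimedean argument, correctly using the well-ordering of the integers bounded below rather than anything about $\mathbb{K}$ itself.
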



\begin{definition}[Infinitesimal, Finite and Infinitely Large]\label{D: Infinitesimal, Finite and Infinitely Large}
  Let $\mathbb{K}$ be a totally ordered field. We define:
\begin{align}
& \mathcal{I}(\mathbb{K})=:\{x\in\mathbb{K} : |x|< 1/n \text{ for all } n\in\mathbb N\},\notag\\
    &\mathcal{F}(\mathbb{K})=:\{x \in\mathbb{K} : |x|\le n  \text{ for some } n\in\mathbb N\},\notag\\
   &\mathcal{L}(\mathbb{K})=:\{x \in\mathbb{K} : \mathbb{N})(n<|x|  \text{ for all } n\in\mathbb N\}.
\end{align}

  The elements in $\mathcal I(\mathbb K), \mathcal F(\mathbb K), \textrm{ and } \mathcal L(\mathbb K)$ are referred to as \emph{infinitesimal (infinitely small), finite and infinitely large}, respectively. We sometimes write $x\approx0$ if $x\in\mathcal I(\mathbb K)$ and $x\approx y$ if $x-y\approx 0$, in which case we say that $x$ is \emph{infinitesimally close} to $y$.
\end{definition}

	The next result follows directly from the above definition.

\begin{lemma} Let $\mathbb K$ be a totally ordered ring. Then: (a) $\mathcal I(\mathbb K)\subset \mathcal F(\mathbb K)$; (b) $\mathbb K=\mathcal F(\mathbb K)\cup\mathcal L(\mathbb K)$;
(c) $\mathcal F(\mathbb K)\cap\mathcal L(\mathbb K)=\varnothing$. If $\mathbb{K}$ is a field, then: (d) $x\in\mathcal I(\mathbb K)$ \ifff $\frac{1}{x}\in\mathcal L(\mathbb K)$ for every non-zero $x\in \mathbb K$.
\end{lemma}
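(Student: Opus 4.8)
The plan is to read all four assertions directly off Definition~\ref{D: Infinitesimal, Finite and Infinitely Large}, using only the totality of the order, the elementary arithmetic of the absolute value $|\cdot|$, and---for part (d)---the order-reversing behaviour of reciprocation on positive elements of a totally ordered field. I do not anticipate any genuine difficulty: the argument is essentially bookkeeping. The only spots that deserve a moment's thought are the correct appeal to trichotomy in (b)--(c) and keeping track of the fact that $|x|>0$ for $x\neq 0$ in part (d).

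For (a), if $x\in\mathcal I(\mathbb K)$ then $|x|<1/n$ for every $n\in\mathbb N$, so in particular $|x|<1\le 1$, whence $x\in\mathcal F(\mathbb K)$ with witness $n=1$. For (b) and (c) I would treat both at once: fix $x\in\mathbb K$; by totality of $<$, for each $n\in\mathbb N$ exactly one of $|x|\le n$ and $n<|x|$ holds. Hence either some $n_0$ satisfies $|x|\le n_0$, in which case $x\in\mathcal F(\mathbb K)$, or no such $n_0$ exists, which says precisely that $n<|x|$ for all $n\in\mathbb N$, in which case $x\in\mathcal L(\mathbb K)$; since both sets are contained in $\mathbb K$, this gives (b). For (c), if $x$ lay in both sets we would have $n_0<|x|\le n_0$ for some $n_0$, contradicting totality.

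For (d), let $x\in\mathbb K$ be non-zero, so that $|x|=\max(x,-x)>0$, and, from $|ab|=|a|\,|b|$ (valid in any totally ordered ring) applied to $x$ and $1/x$ together with $|1|=1$, we get $|1/x|=1/|x|$. Using that $0<a<b$ implies $0<1/b<1/a$ in a totally ordered field, I would chain the equivalences
\[
x\in\mathcal I(\mathbb K)
\iff |x|<1/n \text{ for all } n\in\mathbb N
\iff n<1/|x|=|1/x| \text{ for all } n\in\mathbb N
\iff 1/x\in\mathcal L(\mathbb K).
\]
The only (very minor) obstacle is to state the reciprocal inequalities only for those $n$ for which $1/n$ is meaningful; the order-reversal fact itself is immediate from $\mathbb K_+$ being closed under multiplication, so nothing deeper than Definition~\ref{D: Totally Ordered Rings} is required.
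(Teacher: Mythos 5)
Your proof is correct and is exactly the direct verification from Definition~\ref{D: Infinitesimal, Finite and Infinitely Large} that the paper has in mind when it states that the lemma ``follows directly from the above definition'' and omits the details. The points you single out---trichotomy for (b)--(c), multiplicativity of $|\cdot|$ and order-reversal of reciprocation for (d)---are precisely the routine facts needed, so nothing is missing.
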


\begin{theorem}\label{T: Maximal Ideal}
  Let $\mathbb{K}$ be a totally ordered field. Then $\mathcal{F}(\mathbb{K})$ is an Archimedean ring and $\mathcal{I}(\mathbb{K})$ is a maximal ideal of $\mathcal{F}(\mathbb{K})$. Moreover, $\mathcal{I}(\mathbb{K})$ is a \emph{convex ideal} in the sense that $a\in\mathcal{F}(\mathbb{K})$ and $|a|\le b\in\mathcal{I}(\mathbb{K})$ implies $a\in\mathcal{I}(\mathbb{K})$. Consequently $\mathcal{F}(\mathbb{K})/\mathcal{I}(\mathbb{K})$ is a totally ordered Archimedean field.
\end{theorem}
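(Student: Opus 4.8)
The plan is to verify, in order, the three ring-theoretic assertions and then equip the quotient with an order; almost everything reduces to routine $\varepsilon/2$-type estimates, and the only genuinely delicate point is the well-definedness of the order on $\mathcal{F}(\mathbb{K})/\mathcal{I}(\mathbb{K})$.

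First I would check that $\mathcal{F}(\mathbb{K})$ is a subring of $\mathbb{K}$: it contains $0$ and $1$, and from $|x|\le n$, $|y|\le m$ with $n,m\in\mathbb{N}$ one gets $|x+y|\le n+m$, $|-x|\le n$ and $|xy|\le nm$, so $\mathcal{F}(\mathbb{K})$ is closed under the ring operations. With the order inherited from $\mathbb{K}$ it is a totally ordered ring, and it is Archimedean by the very definition of $\mathcal{F}(\mathbb{K})$. Next, $\mathcal{I}(\mathbb{K})$ is an ideal of $\mathcal{F}(\mathbb{K})$: for $x,y\in\mathcal{I}(\mathbb{K})$ and $n\in\mathbb{N}$, choosing the witnesses for $1/(2n)$ gives $|x+y|\le|x|+|y|<1/n$; and for $a\in\mathcal{F}(\mathbb{K})$ with $|a|\le m$ and $x\in\mathcal{I}(\mathbb{K})$, using $|x|<1/(mn)$ gives $|ax|\le m|x|<1/n$. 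Convexity is immediate: if $a\in\mathcal{F}(\mathbb{K})$ and $|a|\le b\in\mathcal{I}(\mathbb{K})$, then (since $b\ge|a|\ge 0$) $|a|\le b=|b|<1/n$ for every $n$, so $a\in\mathcal{I}(\mathbb{K})$.

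For maximality I would show that $\mathcal{F}(\mathbb{K})$ is local with maximal ideal $\mathcal{I}(\mathbb{K})$. If $x\in\mathcal{F}(\mathbb{K})\setminus\mathcal{I}(\mathbb{K})$, then $x\ne 0$ and $x$ is not infinitesimal, so $|x|\ge 1/n$ for some $n\in\mathbb{N}$, whence $|1/x|\le n$ and $1/x\in\mathcal{F}(\mathbb{K})$ (in agreement with part (d) of the preceding Lemma). Hence any ideal $J$ with $\mathcal{I}(\mathbb{K})\subsetneq J\subseteq\mathcal{F}(\mathbb{K})$ contains some $x\notin\mathcal{I}(\mathbb{K})$ and therefore the unit $x\cdot(1/x)=1$, so $J=\mathcal{F}(\mathbb{K})$. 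Thus $\mathcal{I}(\mathbb{K})$ is a maximal ideal and $\mathbb{F}:=\mathcal{F}(\mathbb{K})/\mathcal{I}(\mathbb{K})$ is a field.

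Finally I would put an order on $\mathbb{F}$. Writing $[x]$ for the coset of $x\in\mathcal{F}(\mathbb{K})$, set $\mathbb{F}_+:=\{[x] : x>0 \text{ and } x\notin\mathcal{I}(\mathbb{K})\}$. The place where convexity is essential is well-definedness: if $[x]=[y]$ with $x>0$ and $x\notin\mathcal{I}(\mathbb{K})$, then $x\ge 1/n$ for some $n$ while $|x-y|<1/(2n)$, forcing $y\ge 1/(2n)>0$, so $y>0$ and $y\notin\mathcal{I}(\mathbb{K})$. Verifying the axioms of Definition~\ref{D: Orderable Ring} for $\mathbb{F}_+$ is then routine: $[0]\notin\mathbb{F}_+$; a sum (product) of two positive non-infinitesimal elements is again positive and non-infinitesimal, which gives closure; and for any $x\in\mathcal{F}(\mathbb{K})$ either $x\in\mathcal{I}(\mathbb{K})$ (so $[x]=[0]$) or exactly one of $x>0$, $-x>0$ holds. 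For the Archimedean property, given $[x]\in\mathbb{F}$ choose $n\in\mathbb{N}$ with $-n\le x\le n$; since $y\ge 0$ implies $[y]\ge[0]$, this yields $-[n]\le[x]\le[n]$, i.e. $|[x]|\le[n]=n\cdot[1]$. I expect the verification of well-definedness and of trichotomy to be the only steps requiring a moment's thought; the remainder is bookkeeping.
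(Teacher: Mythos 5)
Your proof is correct and complete. Note that the paper itself states Theorem~\ref{T: Maximal Ideal} without proof, deferring to the references cited at the start of Section~\ref{S: Preliminaries: Orderable Fields and Infinitesimals}, so there is no in-paper argument to compare against; what you give is the standard one, and you correctly isolate the two points that need care --- that a non-infinitesimal finite element has finite inverse (which yields maximality of $\mathcal{I}(\mathbb{K})$ via locality of $\mathcal{F}(\mathbb{K})$), and that positivity of a non-infinitesimal representative passes to every representative of its coset (which makes the order on the quotient well defined).
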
     

	Here are some familiar properties of ordered fields expressed ``in terms of infinitesimals''.

\begin{theorem}[Archimedean Property]\label{T: Archimedean Property}
  Let $\mathbb{K}$ be a totally ordered ring. Then the following are equivalent: (i) $\mathbb{K}$ is Archimedean. (ii) $\mathcal{F}(\mathbb{K})=\mathbb{K}$. (iii) $\mathcal L(\mathbb K)=\varnothing$. If $\mathbb{K}$ is a field, then each of the above is also equivalent to $\mathcal I(\mathbb K)=\{0\}$.  
\end{theorem}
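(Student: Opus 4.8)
The plan is to verify the three equivalences among (i), (ii) and (iii) by directly unwinding Definition~\ref{D: Totally Ordered Rings} and Definition~\ref{D: Infinitesimal, Finite and Infinitely Large}, and then to obtain the field case from part~(d) of the Lemma that follows Definition~\ref{D: Infinitesimal, Finite and Infinitely Large}. Nothing beyond these definitions and that Lemma will be needed.

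For (i)$\Leftrightarrow$(ii): the assertion that $\mathbb{K}$ is Archimedean says precisely that for every $x\in\mathbb{K}$ there is $n\in\mathbb{N}$ with $|x|\le n$, which is verbatim the defining condition for $x\in\mathcal{F}(\mathbb{K})$; since $\mathcal{F}(\mathbb{K})\subseteq\mathbb{K}$ holds trivially, ``$\mathbb{K}$ Archimedean'' and ``$\mathcal{F}(\mathbb{K})=\mathbb{K}$'' are the same statement. For (ii)$\Leftrightarrow$(iii) I would invoke the two parts of the Lemma asserting $\mathbb{K}=\mathcal{F}(\mathbb{K})\cup\mathcal{L}(\mathbb{K})$ and $\mathcal{F}(\mathbb{K})\cap\mathcal{L}(\mathbb{K})=\varnothing$: if $\mathcal{F}(\mathbb{K})=\mathbb{K}$ then $\mathcal{L}(\mathbb{K})=\mathbb{K}\cap\mathcal{L}(\mathbb{K})=\mathcal{F}(\mathbb{K})\cap\mathcal{L}(\mathbb{K})=\varnothing$, and conversely if $\mathcal{L}(\mathbb{K})=\varnothing$ then $\mathbb{K}=\mathcal{F}(\mathbb{K})\cup\varnothing=\mathcal{F}(\mathbb{K})$.

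Finally, assuming $\mathbb{K}$ is a field, I will show (iii)$\Leftrightarrow$($\mathcal{I}(\mathbb{K})=\{0\}$), using that $0\in\mathcal{I}(\mathbb{K})$ and $0\notin\mathcal{L}(\mathbb{K})$ in any totally ordered field. If $\mathcal{L}(\mathbb{K})=\varnothing$ and some $x\in\mathcal{I}(\mathbb{K})$ were nonzero, then Lemma~(d) would force $1/x\in\mathcal{L}(\mathbb{K})=\varnothing$, a contradiction, so $\mathcal{I}(\mathbb{K})=\{0\}$. Conversely, if $\mathcal{I}(\mathbb{K})=\{0\}$ and $x\in\mathcal{L}(\mathbb{K})$, then $x\neq 0$, and applying Lemma~(d) to the nonzero element $1/x$ gives $1/x\in\mathcal{I}(\mathbb{K})$ (because $1/(1/x)=x\in\mathcal{L}(\mathbb{K})$), hence $1/x=0$, which is impossible; therefore $\mathcal{L}(\mathbb{K})=\varnothing$. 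The entire argument is bookkeeping at the level of definitions, so I do not anticipate a genuine obstacle; the one point worth flagging is that the field hypothesis in the last equivalence is exactly what licenses the use of Lemma~(d), since in a general totally ordered ring a nonzero infinitesimal need not be invertible.
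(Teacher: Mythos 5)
Your proof is correct. The paper states this theorem as one of the ``familiar properties'' and supplies no proof at all, so there is nothing to compare against; your argument --- reading (i)$\Leftrightarrow$(ii) off the definitions, getting (ii)$\Leftrightarrow$(iii) from parts (b) and (c) of the preceding Lemma, and handling the field case via part (d) --- is exactly the routine verification the authors evidently had in mind, and you are right to flag that the field hypothesis is what makes part (d) available for the last equivalence.
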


  Notice that Archimedean rings (which are not fields) might have non-zero infinitesimals. Indeed,  if $\mathbb K$ is a non-Archimedean field, then $\mathcal F(\mathbb K)$ is always an Archimedean ring, but it has non-zero infinitesimals (see Example~\ref{Ex: Field of Rational Functions} below).

\begin{corollary}\label{L: order -> arch} Every Dedekind complete totally ordered field is Archimedean.
\end{corollary}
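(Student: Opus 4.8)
The plan is to argue by contradiction, using the copy of $\mathbb{Q}$ (hence of $\mathbb{N}$) inside $\mathbb{K}$ guaranteed by Theorem~\ref{T: Rationals and Irrationals}(i), and to show that the natural numbers, viewed as a subset of $\mathbb{K}$, cannot have a supremum unless $\mathbb{K}$ is Archimedean. So suppose $\mathbb{K}$ is Dedekind complete but \emph{not} Archimedean. By Theorem~\ref{T: Archimedean Property}, non-Archimedeanity gives $\mathcal{L}(\mathbb{K})\neq\varnothing$; pick an infinitely large $x\in\mathbb{K}$, so that $n<|x|$ for all $n\in\mathbb{N}$. Then $|x|$ is an upper bound for $\mathbb{N}\subseteq\mathbb{K}$, i.e.\ $\mathbb{N}$ is a non-empty subset of $\mathbb{K}$ that is bounded from above.

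Next I would invoke Dedekind completeness to let $s\eqdef\sup_{\mathbb{K}}(\mathbb{N})$, which exists by Definition~\ref{D: Totally Ordered Rings}(4). The key step is then the familiar ``$s-1$'' trick: since $s-1<s$ and $s$ is the \emph{least} upper bound, $s-1\notin\ub(\mathbb{N})$, so there exists $n\in\mathbb{N}$ with $s-1<n$. Adding $1$ to both sides gives $s<n+1$. But $n+1\in\mathbb{N}$, so this contradicts the fact that $s$ is an upper bound for $\mathbb{N}$. Hence no such $x$ can exist, and $\mathbb{K}$ is Archimedean.

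There is essentially no serious obstacle here; the only points requiring care are bookkeeping ones. First, one must make sure that ``$\mathbb{N}\subseteq\mathbb{K}$'' is legitimate and that the order on $\mathbb{N}$ inherited from $\mathbb{K}$ agrees with the usual one and is compatible with $+1$ — this is exactly what the order-field embedding $\sigma$ of Theorem~\ref{T: Rationals and Irrationals}(i) provides, so I would simply cite it. Second, one should note that ``not Archimedean'' was translated into ``$\exists x$ with $n<|x|$ for all $n$'' via Theorem~\ref{T: Archimedean Property}, rather than re-deriving it. With these identifications in place, the contradiction is immediate, so the proof is short.
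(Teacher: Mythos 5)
Your proposal is correct and follows essentially the same argument as the paper: assume non-Archimedeanity, use Theorem~\ref{T: Archimedean Property} to get an infinitely large element bounding $\mathbb{N}$ from above, take $s=\sup(\mathbb{N})$, and derive a contradiction from $s-1<n$ for some $n\in\mathbb{N}$. The added remarks about the embedding $\sigma$ are careful bookkeeping the paper leaves implicit.
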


\begin{proof}
  Let $\mathbb{D}$ be a Dedekind complete totally ordered field and suppose (to the contrary) that $\mathbb{D}$ is non-Archimedean. Then  $\mathcal L(\mathbb{D})\not=\varnothing$ by Theorem~\ref{T: Archimedean Property}. Thus $\mathbb{N}\subset\mathbb{D}$ is bounded from above by $|\lambda|$ for any $\lambda\in\mathcal L(\mathbb D)$. Let $\alpha\in\mathbb{K}$ be the least upper bound of $\mathbb{N}$. Then there exists $n\in\mathbb{N}$ such that $\alpha-1< n$ implying $\alpha< n+1$, a contradiction.
\end{proof}

\begin{example}[Field of Rational Functions]\label{Ex: Field of Rational Functions} Let $\mathbb{R}(t)$ be the field of rational functions in one variable with real coefficients. We supply $\mathbb{R}(t)$ with an ordering given by $f< g$ in $\mathbb{R}(t)$ if there exists $n\in\mathbb{N}$ such that $g(t)-f(t)>0$ in $\mathbb{R}$ for all $t\in (0, 1/n)$.  Notice that $\mathbb{R}(t)$ is a non-Archimedean field: $t, t^2, t+t^2$, etc. are positive infinitesimals, $1+t, 2+t^2, 3+t+t^2$, etc. are finite, but non-infinitesimal, and $1/t, 1/t^2, 1/(t+t^2)$, etc. are infinitely large elements of  $\mathbb{R}(t)$. 
\end{example}
\section{Existence of a Dedekind Complete Field}\label{S: Existence of a Dedekind Complete Field}

	In this section we present a proof of the existence of a Dedekind compete totally ordered field based on the saturation principle from non-standard analysis (Lindstr\o m~\cite{lindstrom}, p. 49). For another non-standard proof based on the concurrence theorem, we refer to (Davis~\cite{davis}, p. 53). For the classical proofs of the same result due to Dedekind and Cantor, we refer to  Rudin~\cite{poma} and Hewitt \& Stromberg~\cite{hewitt}, respectively. We also mention a more recent proof of the existence of a Dedekind complete field in Banaschewski~\cite{bana}, based on a form of completeness introduced by Hilbert. 

\begin{enumerate}

\item Let $^*\mathbb Q$ be the non-standard extension of the  field of the rational numbers $\mathbb Q$ in a polysaturated non-standard model with a set of individuals $\mathbb Q$ (Lindstr\o m~\cite{lindstrom}, p. 51). Let $\mathcal{F}(^*\mathbb Q)$ and $\mathcal{I}(^*\mathbb Q)$ denote the sets of the finite and infinitesimal elements of $^*\mathbb Q$, respectively (Definition~\ref{D: Infinitesimal, Finite and Infinitely Large}). Notice that $\card(\mathcal{F}(^*\mathbb Q))/\mathcal{I}(^*\mathbb Q))<\kappa$ since $\mathcal{F}(^*\mathbb Q)/\mathcal{I}(^*\mathbb Q)$ is an Archimedean field by Theorem~\ref{T: Maximal Ideal}. We denote by $\chi: \mathcal{F}(^*\mathbb Q)\to\mathcal{F}(^*\mathbb Q)/\mathcal{I}(^*\mathbb Q)$ the canonical homomorphism. We also let $\mathcal{A}_+=\{\alpha\in\mathcal{F}(^*\mathbb Q): \alpha>0 \text{ and } \alpha\not\approx 0\}$. 

\item We define $\mathbb D =\{C_\alpha: \alpha\in\mathcal{F}(^*\mathbb Q) \}$, where $C_\alpha=\{q\in\mathbb Q: q<\alpha\}$. We observe that $C_\alpha = C_\beta$ \ifff $\alpha\approx\beta$. Consequently, $\card(\mathbb D)<\kappa$. We convert $\mathbb{D}$ into a totally ordered set (chain) under the ordering: $C_\alpha<C_\beta$ if  $\alpha<\beta$ and $\alpha\not\approx\beta$ (or if $\beta-\alpha\in\mathcal A_+$, for short). We also observe that the mapping $q\to C_q$ from $\mathbb Q$ into $\mathbb D$ is an injection which preserves the usual order in $\mathbb Q$. We define a monad-like function $M: \mathbb D\to \mathcal P(^*\mathbb Q)$ by $M(x)=\{\alpha\in\mathcal F(^*\mathbb Q): C_\alpha=x\}$. Here $\mathcal P(^*\mathbb Q)$ stands for the power set of $^*\mathbb Q$.

\begin{theorem} $\mathbb D$ is a Dedekind complete totally ordered set.
\end{theorem}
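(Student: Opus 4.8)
The plan is to take a non-empty $S\subseteq\mathbb D$ that is bounded above, say by $C_\gamma$, and to exhibit its supremum explicitly as $C_\beta$ for a suitably chosen \emph{finite} non-standard rational $\beta$. First I would form the set $U=\bigcup\{C_\alpha : C_\alpha\in S\}\subseteq\mathbb Q$ and record its elementary properties: $U$ is non-empty (each $C_\alpha$ is non-empty because $\alpha$ is finite, hence bounded below by a standard integer), it is downward-closed in $\mathbb Q$, it has no largest element (each $C_\alpha$ has none, and a maximum of $U$ would be a maximum of some $C_\alpha$), and it is bounded above in $\mathbb Q$: if $|\gamma|\le n\in\mathbb N$ then $n+1\notin C_\gamma\supseteq C_\alpha$ for every $C_\alpha\in S$, so $n+1\notin U$. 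In particular $U$ is a proper, non-empty, downward-closed subset of $\mathbb Q$, i.e. the ``lower part'' of a Dedekind cut.

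The crux is to realize this cut $U$ as $C_\beta$ for some $\beta\in\mathcal F(^*\mathbb Q)$; this is precisely where polysaturation enters. I would consider the family of internal subsets of $^*\mathbb Q$
\[
\{\,\{\xi\in{}^*\mathbb Q:\xi>q\} : q\in U\,\}\ \cup\ \{\,\{\xi\in{}^*\mathbb Q:\xi\le q\} : q\in\mathbb Q\setminus U\,\},
\]
which has cardinality at most $\aleph_0<\kappa$. Its finite intersection property is immediate: given $q_1,\dots,q_m\in U$ and $q_1',\dots,q_k'\in\mathbb Q\setminus U$, set $q^\ast=\max_i q_i$ and $q_\ast=\min_j q_j'$; since $U$ is downward-closed we have $q^\ast<q_\ast$, and the standard rational $q_\ast$ lies in all of the listed sets. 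By the saturation principle the whole family has non-empty intersection, so there is $\beta\in{}^*\mathbb Q$ with $q<\beta$ for all $q\in U$ and $\beta\le q$ for all $q\in\mathbb Q\setminus U$. From $q_1\le\beta\le n+1$ (any $q_1\in U$) we get $\beta\in\mathcal F(^*\mathbb Q)$, and the two defining inequalities give, for $q\in\mathbb Q$, that $q\in C_\beta\iff q<\beta\iff q\in U$; hence $C_\beta=U$ and $C_\beta\in\mathbb D$.

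It then remains to verify $C_\beta=\sup S$ in $\mathbb D$. For this I would first note that the order on $\mathbb D$ coincides with inclusion: $C_\alpha\le C_\beta$ in $\mathbb D$ iff $C_\alpha\subseteq C_\beta$. One direction is clear, since $\alpha<\beta$ forces $C_\alpha\subseteq C_\beta$; conversely, if $C_\alpha\subseteq C_\beta$ and $C_\alpha\neq C_\beta$, then $\beta>\alpha$ (otherwise $C_\beta\subseteq C_\alpha$) and $\alpha\not\approx\beta$ (otherwise $C_\alpha=C_\beta$ by the observation in item 2), so $\beta-\alpha\in\mathcal A_+$ and $C_\alpha<C_\beta$. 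Granting this, $C_\beta$ is an upper bound of $S$ because $C_\alpha\subseteq U=C_\beta$ for each $C_\alpha\in S$; and if $C_\delta$ is any upper bound, then $C_\alpha\subseteq C_\delta$ for all $C_\alpha\in S$, whence $C_\beta=U=\bigcup_{C_\alpha\in S}C_\alpha\subseteq C_\delta$, i.e. $C_\beta\le C_\delta$. Thus $C_\beta$ is the least upper bound of $S$.

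The single genuine obstacle is the middle step: extracting an actual point $\beta$ of $^*\mathbb Q$ that ``fills'' the cut $U$ and then checking it is finite; this is dispatched cleanly by the saturation principle together with the cardinality bound $\card(\mathbb D)<\kappa$. Everything else is bookkeeping about inclusions among the sets $C_\alpha$. (I would also make sure beforehand that the relation on $\mathbb D$ is genuinely a total order — trichotomy follows from trichotomy in $^*\mathbb Q$ combined with $C_\alpha=C_\beta\iff\alpha\approx\beta$ — though this may already be regarded as part of the construction in item 2.)
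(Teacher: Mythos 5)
Your proof is correct, but it reaches the supremum by a genuinely different route than the paper. The paper squeezes a single point $\gamma$ \emph{between} the set $A$ and its set of upper bounds $B=\ub(A)$: it chooses representatives $\varphi(a)\in M(a)$, $\psi(b)\in M(b)$ and applies saturation to the family of internal intervals $\left\{(\varphi(a),\psi(b))\right\}_{a\in A,\,b\in B}$, which is why it needs the cardinality estimate $\card(\mathbb D)<\kappa$ (hence polysaturation, or at least $\mathfrak{c}^+$-saturation) and a separate case for when $\max(A)$ exists. You instead build the supremum \emph{from below} as the union $U=\bigcup_{C_\alpha\in S}C_\alpha$, reduce everything to the purely standard cut $(U,\mathbb Q\setminus U)$, and invoke saturation only for the \emph{countable} family of half-lines indexed by standard rationals. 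This buys something real: your argument needs only $\aleph_1$-saturation rather than polysaturation, it handles finite and infinite $S$ uniformly (no case split on whether $\max(S)$ exists), and it makes explicit the order-equals-inclusion fact that the paper's closing line ``$\sup(A)=C_\gamma$'' leaves implicit. One small slip, which costs you nothing: your parenthetical claim that each $C_\alpha$, and hence $U$, has no largest element is false in general --- if $\alpha=q+\epsilon$ with $q\in\mathbb Q$ and $\epsilon$ a positive infinitesimal, then $C_\alpha=\{p\in\mathbb Q:p\le q\}$ has largest element $q$. Fortunately your saturation argument never uses this; it only needs $U$ to be non-empty, downward closed, and proper, all of which you establish correctly. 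You should simply delete that remark (or note that $U$ need only be the lower set of a ``weak'' cut).
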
 

\begin{proof} Let $A\subset\mathbb D$ be a set bounded from above by some $C\in\mathbb D$ and let $B=: U\!B(A)$ stand for the set of upper bounds of $A$ in $\mathbb D$. If $A$ is finite or $C\in\mathbb D$, then $\sup(A)$ exists since $\sup(A)=\max(A)$. Suppose that $A$ is an infinite set and $C\notin A$ and observe that the family of open intervals $\left\{(\alpha, \beta)\right\}_{C_\alpha\in A,\; C_\beta\in B}$ has the finite intersection property. Next, we have to select a subfamily of cardinality at most $\kappa$ which still has the finite intersection property. By the axiom of choice, there exists a function $\varphi: A\to{^*\mathbb Q}$ such that $\varphi(a)\in M(a)$ for all $a\in A$. Similarly, there exists  $\psi: B\to{^*\mathbb Q}$ such that $\psi(b)\in M(b)$ for all $a\in B$. The family $\left\{(\varphi(a), \psi(b))\right\}_{a\in A,\; b\in B}$ is of cardinality less than $\kappa$ and by the saturation principle (Lindstr\o m~\cite{lindstrom}, p. 49), there exists $\gamma\in \mathcal F(^*\mathbb Q)$ such that $\varphi(a)<\gamma< \psi(b)$ for all $a\in A$ and all $b\in B$. Thus $\sup(A)=C_\gamma$. 
\end{proof}

\item We define addition and multiplication in $\mathbb D$ by $C_\alpha+C_\beta=C_{\alpha+\beta}$ and $C_\alpha C_\beta=C_{\alpha\beta}$, respectively. We observe that $\mathbb D$ is a field with zero $0=:C_0$ and unit $1=:C_1$, and with additive inverse $-C_\alpha=C_{-\alpha}$ and multiplicative inverse $C_\alpha^{-1}=C_{\alpha^{-1}}$. 

\begin{theorem} Also $\mathbb D$ is a totally ordered field. 
\end{theorem}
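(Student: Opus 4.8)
The plan is to identify $\mathbb D$, as an ordered field, with the Archimedean quotient field $\mathcal F(^*\mathbb Q)/\mathcal I(^*\mathbb Q)$ of Theorem~\ref{T: Maximal Ideal}, and to transport the total order across that identification. Concretely, I would consider the map $\iota:\mathbb D\to\mathcal F(^*\mathbb Q)/\mathcal I(^*\mathbb Q)$ defined by $\iota(C_\alpha)=\chi(\alpha)$. Since $C_\alpha=C_\beta$ holds exactly when $\alpha\approx\beta$ (noted in item 2) and $\chi(\alpha)=\chi(\beta)$ holds exactly when $\alpha-\beta\in\mathcal I(^*\mathbb Q)$, the map $\iota$ is a well-defined bijection; and from the defining rules $C_\alpha+C_\beta=C_{\alpha+\beta}$, $C_\alpha C_\beta=C_{\alpha\beta}$ (item 3) together with the fact that $\chi$ is a unital ring homomorphism, $\iota$ is a field isomorphism. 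This is essentially the content of the observation in item 3 that $\mathbb D$ is a field, with $0=C_0$, $1=C_1$, and inverses as stated; the well-definedness of $+$ and $\cdot$ on $\mathbb D$ is exactly the statement that $\mathcal I(^*\mathbb Q)$ is an ideal of the ring $\mathcal F(^*\mathbb Q)$.

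It then remains to check that $\iota$ carries the chain order on $\mathbb D$ to the order of the totally ordered field $\mathcal F(^*\mathbb Q)/\mathcal I(^*\mathbb Q)$. Recall that the positive cone of that quotient making it totally ordered in Theorem~\ref{T: Maximal Ideal} is precisely $\chi(\mathcal A_+)$, where $\mathcal A_+=\{\alpha\in\mathcal F(^*\mathbb Q):\alpha>0,\ \alpha\not\approx0\}$: this uses the convexity of $\mathcal I(^*\mathbb Q)$, which guarantees that $\chi(a)>0$ in the quotient if and only if some (equivalently every) representative of the class is positive and non-infinitesimal. On the other hand, by the definition of the chain order in item 2 we have, for $\alpha,\beta\in\mathcal F(^*\mathbb Q)$, that $C_\alpha<C_\beta$ iff $\beta-\alpha\in\mathcal A_+$ iff $\chi(\beta-\alpha)>0$ iff $\chi(\alpha)<\chi(\beta)$. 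Hence $\iota$ is an order isomorphism, and since its codomain is a totally ordered field so is $\mathbb D$; moreover the associated positive cone of $\mathbb D$ is $\mathbb D_+=\iota^{-1}(\chi(\mathcal A_+))=\{C_\alpha:\alpha\in\mathcal A_+\}$, and the field order $<_{\mathbb D_+}$ coincides with the chain order introduced in item 2.

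Alternatively — and this is the same argument unwound — one can verify Definition~\ref{D: Orderable Ring} directly for $\mathbb D_+:=\{C_\alpha:\alpha\in\mathcal A_+\}$: we have $0=C_0\notin\mathbb D_+$ since $0\not\approx\alpha$ for $\alpha\in\mathcal A_+$; closure under addition and multiplication reduces to the elementary observation that if $\alpha>1/n$ and $\beta>1/m$ in $^*\mathbb Q$ then $\alpha+\beta>1/n$ and $\alpha\beta>1/(nm)$, so sums and products of elements of $\mathcal A_+$ are again positive, non-infinitesimal, and finite (the last because $\mathcal F(^*\mathbb Q)$ is a ring); and the trichotomy holds because for $\alpha\in\mathcal F(^*\mathbb Q)$ either $\alpha\approx0$ (so $C_\alpha=0$), or $\alpha>0$ with $\alpha\not\approx0$ (so $C_\alpha\in\mathbb D_+$), or $\alpha<0$ with $\alpha\not\approx0$ (so $-C_\alpha=C_{-\alpha}\in\mathbb D_+$). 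Checking that $C_\alpha<_{\mathbb D_+}C_\beta$ iff $C_{\beta-\alpha}\in\mathbb D_+$ iff $C_\alpha<C_\beta$ in the chain order then closes the argument.

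I do not expect a genuine obstacle here: the only points needing care are (i) recalling precisely that the positive cone of the quotient field of Theorem~\ref{T: Maximal Ideal} is $\chi(\mathcal A_+)$, i.e.\ that positivity in $\mathcal F(^*\mathbb Q)/\mathcal I(^*\mathbb Q)$ is witnessed by positive non-infinitesimal representatives — which is exactly where convexity of $\mathcal I(^*\mathbb Q)$ enters — and (ii) confirming, as already used in item 3, that $+$ and $\cdot$ on $\mathbb D$ are independent of the chosen representatives $\alpha,\beta$ of the cuts; both follow at once from $\mathcal I(^*\mathbb Q)$ being an ideal of $\mathcal F(^*\mathbb Q)$.
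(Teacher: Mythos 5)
Your proposal is correct, and in fact it contains the paper's own proof as its ``alternative'' route: the paper simply sets $\mathbb D_+=\{C_\alpha\in\mathbb D: C_\alpha>0\}$ and verifies the three conditions of Definition~\ref{D: Orderable Ring} directly --- $C_0\notin\mathbb D_+$ because $0\notin\mathcal A_+$, closure of $\mathbb D_+$ under addition and multiplication because $\mathcal A_+$ is closed under these operations in $^*\mathbb Q$, and trichotomy from trichotomy in $^*\mathbb Q$ --- which is precisely your second paragraph's argument. Your primary route, transporting the order through the isomorphism $C_\alpha\mapsto\chi(\alpha)$ onto $\mathcal F(^*\mathbb Q)/\mathcal I(^*\mathbb Q)$ and invoking Theorem~\ref{T: Maximal Ideal}, is a legitimate alternative packaging; it is essentially the paper's later Corollary (Another Complete Field) run in the opposite direction, using the quotient's known total order to induce one on $\mathbb D$ rather than using $\mathbb D$'s completeness to conclude something about the quotient. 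What the isomorphism route buys is that the order axioms need only be checked once (in the proof of Theorem~\ref{T: Maximal Ideal}); what the paper's direct verification buys is independence from that theorem and a shorter, self-contained argument, which matters for the paper's stated goal of keeping the construction minimal. Both routes hinge on the same two facts you correctly isolate: that $\mathcal I(^*\mathbb Q)$ is a convex ideal of $\mathcal F(^*\mathbb Q)$ (so positivity of a class is representative-independent) and that $\mathcal A_+$ is closed under the ring operations.
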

\begin{proof} Let $\mathbb D_+=\{C_\alpha\in\mathbb D: C_\alpha>0 \}$. We have $C_0\not\in\mathbb D_+$ since $0\notin\mathcal A_+$. Also, $\mathbb D_+$ is closed under the addition and multiplication in $\mathbb D$ since $\mathcal{A}_+$ is closed under the addition and multiplication in $^*\mathbb Q$. Finally, suppose that $C_\alpha\not= 0$, i.e. $\alpha\not\approx 0$. We have either $\alpha>0$ or $\alpha<0$ by the trichotomy of the order in $^*\mathbb Q$. The latter implies either $C_\alpha>0$ or $C_\alpha<0$.
\end{proof}
\end{enumerate}
\begin{remark}[Uniqueness of the Operations] Recall that every Dedekind complete ordered set (chain) $\mathbb D$, which contains a copy of the ordered set $(\mathbb Q, <)$, determines {\em uniquely} the algebraic operations in $\mathbb D$ which convert $\mathbb D$ into a totally ordered field by the formulas: 
\begin{description}
\item[(i)] If $C_\alpha, C_\beta\in\mathbb D$, then  

$C_\alpha + C_\beta=\sup_\mathbb D\{p+q: p, q\in \mathbb Q,\;  p<\alpha \text{ and }q<\beta\}$; $-C_\alpha=\sup_\mathbb D\{q\in\mathbb Q: q<-\alpha\}$.

\item[(ii)] If $C_\alpha, C_\beta>0$, then $C_\alpha C_\beta=\sup_\mathbb D\{pq: p, q\in \mathbb Q_+,\;  p<\alpha \text{ and }q<\beta\}$ and  

	$C_\alpha^{-1}=\sup_\mathbb D\{q\in\mathbb Q_+: q<\alpha^{-1}\}$. 

\item [(iii)] We also let $(-C_\alpha)(-C_\beta)=C_\alpha C_\beta$ and $(-C_\alpha)C_\beta)=-(C_\alpha C_\beta)$.                                            

In fact, we do not need these formulas for our construction; our definitions of addition and multiplication in $\mathbb D$, presented in \# \textbf{3} above, are simpler. 
\end{description}
\end{remark}

\begin{remark}[Why polysaturation ?] The above arguments hold for any $\mathfrak{c}^+$-saturated non-standard model, where $\mathfrak{c}=\card(\mathbb R)$ (Lindstr\o m~\cite{lindstrom}, p. 49). We prefer to use a polysaturated non-standard model only to avoid even mentioning $\mathbb R$ or its cardinality $\mathfrak{c}$.
\end{remark}

\begin{remark}[Original Dedekind Cuts] Our result shows that $\mathbb D$ consists exactly of the usual (original) Dedekind cuts. Thus our construction can be viewed as a ``explicit parametric description of the Dedekind cuts in terms of $^*\mathbb Q$''.
\end{remark}
  
\begin{corollary}[Another Complete Field] $\mathcal{F}(^*\mathbb Q)/\mathcal{I}(^*\mathbb Q)$ is a Dedikind complete totally ordered field. 

\begin{proof} $\mathcal{F}(^*\mathbb Q)/\mathcal{I}(^*\mathbb Q)$ is a totally ordered field by Theorem~\ref{T: Maximal Ideal} and it is easy to see that the mapping $C_\alpha\to\chi(\alpha)$, from $\mathbb D$ to $\mathcal{F}(^*\mathbb Q)/\mathcal{I}(^*\mathbb Q)$, is an order field isomorphism. 
\end{proof}

 We should mention that a direct proof of the Dedekind completeness of $\mathcal{F}(^*\mathbb Q)/\mathcal{I}(^*\mathbb Q)$ based on the concurrence theorem appears in (Davis~\cite{davis}, p. 53).
\end{corollary}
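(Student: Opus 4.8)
The plan is to reduce the statement to the Dedekind completeness of $\mathbb D$, which has already been established in this section, by producing an order-field isomorphism $\Phi\colon\mathbb D\to\mathcal F(^*\mathbb Q)/\mathcal I(^*\mathbb Q)$. Dedekind completeness is preserved under order isomorphisms — such a map sends bounded-above subsets to bounded-above subsets and least upper bounds to least upper bounds — so once $\Phi$ is in hand, and since $\mathcal F(^*\mathbb Q)/\mathcal I(^*\mathbb Q)$ is already known to be a totally ordered field by Theorem~\ref{T: Maximal Ideal}, the corollary follows immediately. Thus all the work lies in checking that the candidate map $\Phi(C_\alpha)=\chi(\alpha)$ is an order-field isomorphism.

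First I would settle that $\Phi$ is a well-defined bijection. Well-definedness and injectivity coincide here: by the observation recorded in \#\,\textbf{2}, $C_\alpha=C_\beta$ holds exactly when $\alpha\approx\beta$, i.e. when $\alpha-\beta\in\mathcal I(^*\mathbb Q)$, which is precisely the condition $\chi(\alpha)=\chi(\beta)$. Surjectivity is immediate, since every element of $\mathcal F(^*\mathbb Q)/\mathcal I(^*\mathbb Q)$ is of the form $\chi(\alpha)$ with $\alpha\in\mathcal F(^*\mathbb Q)$, and then $\Phi(C_\alpha)$ is that element. The ring-homomorphism property is then read off directly from the definitions of the operations on $\mathbb D$ presented in \#\,\textbf{3}: $\Phi(C_\alpha+C_\beta)=\chi(\alpha+\beta)=\chi(\alpha)+\chi(\beta)$, $\Phi(C_\alpha C_\beta)=\chi(\alpha\beta)=\chi(\alpha)\chi(\beta)$, $\Phi(C_0)=0$ and $\Phi(C_1)=1$, using only that $\chi$ is a ring homomorphism.

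It remains to check that $\Phi$ preserves order, and for this it is enough to show that $C_\alpha>0$ if and only if $\chi(\alpha)>0$ in $\mathcal F(^*\mathbb Q)/\mathcal I(^*\mathbb Q)$; that is, $\alpha\in\mathcal A_+$ if and only if $\chi(\alpha)>0$. If $\alpha\in\mathcal A_+$, then $\alpha>0$ and $\alpha\not\approx0$, so $\chi(\alpha)\ge0$ while $\chi(\alpha)\ne0$, hence $\chi(\alpha)>0$. Conversely, suppose $\chi(\alpha)>0$; then $\alpha\not\approx0$, and if we had $\alpha<0$ there would be an infinitesimal $\iota$ with $\iota\le\alpha<0$, whence $|\alpha|\le|\iota|$, and the convexity of $\mathcal I(^*\mathbb Q)$ (Theorem~\ref{T: Maximal Ideal}) would force $\alpha\in\mathcal I(^*\mathbb Q)$, a contradiction; so $\alpha>0$ and $\alpha\not\approx0$, i.e. $\alpha\in\mathcal A_+$. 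This completes the verification that $\Phi$ is an order-field isomorphism, and transporting the supremum property of $\mathbb D$ along $\Phi$ proves the corollary.

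The only step carrying any mathematical content is this last equivalence between the positive cone of $\mathbb D$ and that of the quotient, which hinges on the convexity of the ideal $\mathcal I(^*\mathbb Q)$ asserted in Theorem~\ref{T: Maximal Ideal}; everything else is bookkeeping with the definitions already set up in \#\,\textbf{2} and \#\,\textbf{3}.
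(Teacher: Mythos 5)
Your proposal is correct and follows the same route as the paper: the paper's proof also cites Theorem~\ref{T: Maximal Ideal} for the totally ordered field structure and then asserts that $C_\alpha\mapsto\chi(\alpha)$ is an order field isomorphism, leaving the verification as ``easy to see.'' You have simply filled in the details of that verification (well-definedness, bijectivity, the homomorphism property, and order preservation via convexity of $\mathcal I(^*\mathbb Q)$), all of which check out.
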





\begin{thebibliography}{9}
 
  \bibitem{bana} B. Banaschewski. \emph{On Proving the Existence of Complete Ordered Fields.} American Math Monthly 105(6), pp. 548-551.
  \bibitem{davis} M. Davis. \emph{Applied Non-standard Analysis}. Dover, 2005.
  \bibitem{hewitt} E. Hewitt and K. Stromberg. \emph{Real and Abstract Analysis}. Springer, first edition, 1965.

  
  \bibitem{lang} S. Lang, Algebra, Addison-Wesley, 1992.
 
  \bibitem{lindstrom} T. Lindstr\o m. An invitation to nonstandard analysis. In: Cutland N (ed) \emph{Nonstandard Analysis and its applications}. Cambridge University Press, London, 1988, pp 1-105.


  \bibitem{riben} P. Ribenboim, {\em The Theory of Classical Valuation}, Springer Monographs in Mathematics, Springer-Verlag, 1999.
 
  \bibitem{poma} W. Rudin. \emph{Principles of Mathematical Analysis}. McGraw Hill, third edition, 1976.
 
  \bibitem{waerden} B. L.van der Waerden. \emph{Algebra}. Springer, seventh edition, 1966.
\end{thebibliography}
\end{document}